\newtheorem{theorem}{Theorem}
\newtheorem{prop}{Proposition}
\theoremstyle{definition}
\theoremstyle{remark}
\numberwithin{equation}{section}
\def\re{\mathbb{R}}
\def\Z{\mathbb{Z}}
\def\({\left(}
\def\){\right)}
\def\pd{\partial}
\def\la{\lambda}
\def\al{\alpha}
\begin{document}
\title[]{Nondegeneracy of the entire solution for the $N$-Laplace Liouville equation}

\author{Futoshi Takahashi}
\address{Department of Mathematics, Osaka Metropolitan University \& OCAMI, Sumiyoshi-ku, Osaka, 558-8585, Japan}
\email{futoshi@omu.ac.jp}

\subjclass[2010]{Primary 35J60; Secondary 35J20, 35B08.}

\keywords{Liouville equation, nondegeneracy, $N$-Laplacian.}
\date{\today}

\dedicatory{}

\begin{abstract}
In this note, we prove the nondegeneracy of the explicit finite-mass solution to the $N$-Laplace Liouville equation on the whole space, 
which is recently shown to be unique up to scaling and translation.
\end{abstract}

\maketitle

%
%
\section{Introduction}

Let $N \ge 2$ be an integer.
In this note, we concern the following quasi-linear Liouville equation:
\begin{align}
\label{Liouville}
	\begin{cases}
	&-\Delta_N U = e^U \quad \text{in} \quad \re^N, \\
	&\int_{\re^N} e^U dx < \infty,
	\end{cases}
\end{align}
here $\Delta_N U = {\rm div } (|\nabla U|^{N-2} \nabla U)$ denotes the $N$-Laplacian of a function $U$.
Problem \eqref{Liouville} has the explicit solution (Liouville bubble)
\begin{equation}
\label{bubble}
	U(x) = \log \frac{C_N}{\( 1 + |x|^{\frac{N}{N-1}} \)^N}, \quad x \in \re^N,
\end{equation}
where $C_N = N \(\frac{N^2}{N-1}\)^{N-1}$.
Thanks to the scaling and translation invariance of the problem, the functions
\begin{equation}
\label{scale}
	U_{\la, a}(x) = U(\la(x-a)) + N \log \la, \quad \la > 0, a \in \re^N
\end{equation}
constitute a $(N+1)$-dimensional family of solutions to \eqref{Liouville} with
\[
	\int_{\re^N} e^{U_{\la,a}} dx = \(\frac{\omega_{N-1}}{N}\) C_N,
\]
where $\omega_{N-1}$ denotes the area of the unit sphere $S^{N-1}$ in $\re^N$.
Indeed, all the solutions of \eqref{Liouville} are of the form \eqref{scale}.
This fact is first proven by Chen and Li \cite{Chen-Li} when $N = 2$ by the method of moving planes.
Recently, P. Esposito \cite{Esposito} proves the same classification result for \eqref{Liouville} when $N \ge 3$.
His method exploits a weighted Sobolev estimates at infinity for any solution to \eqref{Liouville}, an isoperimetric argument, and the Pohozaev identity,
and does not use the moving plane arguments. 

We are interested in the linear nondegeneracy of the explicit solution $U$ in \eqref{bubble}.
Thus we consider the linearized operator around $U$:
\[
	L(\phi) = \frac{d}{dt} \Big |_{t=0} N(U + t \phi)
\]
where $N(U + t\phi) = \Delta_N (U + t\phi) + e^{U + t\phi}$ for $t \in \re$ and a function $\phi$.
Then we compute directly that
\begin{equation}
\label{L}
	L(\phi) = {\rm div } (|\nabla U|^{N-2} \nabla \phi) + (N-2) {\rm div } (|\nabla U|^{N-4} (\nabla U \cdot \nabla \phi) \nabla U) + e^U \phi,
\end{equation}
here and henceforth, $``\cdot"$ denotes the standard inner product in $\re^N$.
Since $U_{\la,a}$ in \eqref{scale} solves the equation
\[
	\Delta_N U_{\la.a} + e^{U_{\la,a}} = 0 \quad \text{in} \quad \re^N,
\]
by differentiating the above equation with respect to the parameters $\la$ and $a_1, \dots, a_N$ at $\la = 1$ and $a = 0$,
we obtain the bounded solutions 
\begin{align}
\label{Z_0}
	&Z_0(x) = \frac{d}{d\la} \Big|_{\la=1, a=0} U_{\la,a} = x \cdot \nabla U + N, \\
\label{Z_i}
	&Z_i(x) = \frac{d}{d a_i} \Big|_{\la=1, a=0} U_{\la,a} = \frac{\pd U}{\pd x_i}, \quad (i=1,\dots, N)
\end{align}
to the linearized equation $L(\phi) = 0$.

The aim of this note is to prove the following nondegeneracy of $U$:

\begin{theorem}
\label{Theorem:nondegeneracy}
Let $U$ be as in \eqref{bubble} and let $\phi$ be a solution in $L^{\infty} \cap C^2(\re^N)$ to the linearized equation $L(\phi) = 0$,
here $L$ is as in \eqref{L}.
Then $\phi$ can be written as a linear combination of $Z_0, Z_1, \dots, Z_N$ defined by \eqref{Z_0}, \eqref{Z_i}. 
\end{theorem}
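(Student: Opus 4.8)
The plan is to exploit the radial symmetry of $U$ and reduce the problem to a family of ordinary differential equations via spherical harmonics. Writing $U(x)=u(r)$ with $r=|x|$, we have $\nabla U=u'(r)\frac{x}{r}$, so that $|\nabla U|=|u'(r)|$ and $\nabla U\cdot\nabla\phi=u'\,\pd_r\phi$. Because $\nabla U$ is purely radial, both divergence terms in \eqref{L} map each spherical-harmonic mode to itself. Expanding $\phi(r,\theta)=\sum_{k\ge 0}\psi_k(r)Y_k(\theta)$, where $\{Y_k\}$ are the spherical harmonics on $S^{N-1}$ with $-\Delta_{S^{N-1}}Y_k=\lambda_k Y_k$ and $\lambda_k=k(k+N-2)$, projecting $L(\phi)=0$ onto $Y_k$ yields
\begin{equation*}
\mathcal{L}_k\psi_k:=(N-1)\frac{1}{r^{N-1}}\frac{d}{dr}\!\big(r^{N-1}a(r)\psi_k'\big)-\lambda_k\frac{a(r)}{r^2}\psi_k+e^{U}\psi_k=0,\qquad a(r):=|u'(r)|^{N-2},
\end{equation*}
where the factor $N-1$ arises from the fusion of the isotropic and anisotropic terms (the latter, evaluated on a single mode, contributes $(N-2)$ times the radial part of the former). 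The first task is to justify this reduction for a merely bounded $C^2$ solution and to record that $\|\phi\|_{\infty}<\infty$ forces each $\psi_k$ to be bounded on $(0,\infty)$, since $\psi_k$ is an angular projection of $\phi$.

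Next I would treat the low modes. An indicial analysis of $\mathcal{L}_k$ at $r=0$ gives the Euler equation $(N-1)\alpha^2+N(N-2)\alpha-\lambda_k=0$, whose two roots have, for each $k$, exactly one branch that is bounded near the origin (for $k=0$ and $N=2$ the double root is resolved by a logarithmic second solution, still unbounded). Hence the space of solutions of $\mathcal{L}_k\psi=0$ that remain bounded as $r\to 0$ is one–dimensional. For $k=0$ the radial function $Z_0=ru'(r)+N$ solves $\mathcal{L}_0\psi=0$ and is globally bounded, and for $k=1$ the profile $u'(r)$ (the radial part of $Z_i=\pd U/\pd x_i$) solves $\mathcal{L}_1\psi=0$ and is globally bounded. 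A globally bounded solution is in particular bounded at the origin, hence lies in these one–dimensional spaces; therefore the bounded solutions of the $k=0$ and $k=1$ equations are exactly the multiples of $Z_0$ and the span of $Z_1,\dots,Z_N$ (the $N$ degree-one harmonics), respectively.

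The heart of the matter, and the step I expect to be the main obstacle, is to prove $\psi_k\equiv 0$ for every $k\ge 2$. Here I would use the strictly positive solution $\psi_\ast(r):=-u'(r)=|u'(r)|>0$ of $\mathcal{L}_1\psi=0$ as a reference and perform the Picone-type substitution $\psi_k=\psi_\ast v$. Since $\mathcal{L}_k$ differs from $\mathcal{L}_1$ only by the term $-(\lambda_k-\lambda_1)a/r^2$, a direct computation recasts $\mathcal{L}_k\psi_k=0$ in divergence form
\begin{equation*}
\big((N-1)r^{N-1}a\,\psi_\ast^{2}\,v'\big)'=(\lambda_k-\lambda_1)\,r^{N-3}a\,\psi_\ast^{2}\,v,
\end{equation*}
so that multiplying by $v$ and integrating over $(0,\infty)$ gives
\begin{equation*}
(N-1)\int_0^\infty r^{N-1}a\,\psi_\ast^{2}\,(v')^{2}\,dr+(\lambda_k-\lambda_1)\int_0^\infty r^{N-3}a\,\psi_\ast^{2}\,v^{2}\,dr=0,
\end{equation*}
provided the boundary term $\big[(N-1)r^{N-1}a\,\psi_\ast^{2}\,v'v\big]_0^\infty$ vanishes. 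As $\lambda_k>\lambda_1=N-1$ for $k\ge 2$, both integrals are nonnegative, forcing $v\equiv 0$ and hence $\psi_k\equiv 0$. The delicate point is the vanishing of the boundary term: the operator degenerates at the origin (where $u'(0)=0$, so $a(0)=0$) and $\psi_\ast$ itself tends to $0$ both as $r\to 0$ and as $r\to\infty$, so a priori $v=\psi_k/\psi_\ast$ may be large near the endpoints. To control it I would invoke the indicial analysis at both ends: boundedness of $\psi_k$ selects the admissible exponent $\psi_k\sim r^{\alpha_k^{+}}$ near $0$ and the decaying exponent near infinity, and because $\alpha^{+}$ increases and the decay rate strengthens with $\lambda_k$, these exponents are strictly separated from those of $\psi_\ast$ precisely for $k\ge 2$; substituting the asymptotics then shows the boundary contributions vanish at both ends. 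Combining the three steps gives $\phi=c_0Z_0+\sum_{i=1}^{N}c_iZ_i$, which is the assertion of Theorem \ref{Theorem:nondegeneracy}.
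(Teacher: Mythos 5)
Your proposal has the same overall architecture as the paper's proof: decompose $\phi$ into spherical harmonics, identify the bounded solutions of the mode-$k$ ODE with multiples of $Z_0$ (for $k=0$) and of the radial part of $Z_i$ (for $k=1$), and kill all modes $k\ge 2$ by a Sturm-type comparison against the positive $k=1$ profile $|U'|$; your divergence form of the radial operator is exactly the paper's equation \eqref{L_k=0bis}. The differences lie in the sub-arguments. For $k=0,1$ the paper does not use an indicial analysis at the origin: it performs reduction of order $\psi=c(r)\psi_k(r)$ around the explicit bounded solution, integrates $c'$ in closed form, and shows the second solution grows like $\log r$ (resp.\ like $r$) at infinity. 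This is fully explicit and sidesteps the issue that your Frobenius-type analysis must be justified for coefficients involving the non-analytic power $r^{N/(N-1)}$ (your conclusion is correct, but this step needs either the explicit reduction-of-order formula or an asymptotic ODE theorem). For $k\ge2$ the paper integrates the Wronskian identity between the mode-$k$ and mode-$1$ equations only up to the first zero $R_k$ of $\psi_k$, where the boundary term has a favorable sign, rather than over all of $(0,\infty)$; this avoids precisely the delicate point you single out, namely the vanishing of the Picone boundary term at infinity, which for a merely bounded $\psi_k$ requires first proving the decay $\psi_k\sim r^{-\sqrt{\lambda_k/(N-1)}}$. Your route is viable but that decay estimate is a genuine missing lemma in your write-up (you sketch it but do not prove it), whereas the paper's local argument needs no information at infinity. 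In short: same skeleton, but the paper's choices of reduction of order and of the truncated comparison interval make the two technical points you flag unnecessary.
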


The above theorem was known already when $N = 2$, see \cite{Baraket-Pacard}, \cite{El Mehdi-Grossi}.
In this note, we extend the result to $N \ge 3$.

Our proof is similar to that of \cite{Pistoia-Vaira}, in which the authors study the critical $p$-Laplace equation 
\[
	-\Delta_p U = U^{p^*-1} \quad \text{in} \quad \re^N, \quad U > 0,
\]
where $p^* = \frac{Np}{N-p}$, $1 < p < N$, is the critical Sobolev exponent.
They prove the linear nondegeneracy of the explicit entire solution (Aubin-Talenti bubble)
\[
	U(x) = \( \frac{\al_{N,p}}{1 + |x|^{\frac{p}{p-1}}}\)^{\frac{N-p}{p}}
\]
where $\al_{p,N} = N^{\frac{1}{p}} \( \frac{N-p}{p-1} \)^{\frac{p-1}{p}}$,
extending the former result by Rey \cite{Rey} for $p = 2$.

%
%
\section{Proof of Theorem \ref{Theorem:nondegeneracy}}

In this section, we prove Theorem \ref{Theorem:nondegeneracy}.
We follow the method by \cite{Pistoia-Vaira}. See also \cite{Baraket-Pacard}, \cite{El Mehdi-Grossi}.

First, we prove the next proposition:
\begin{prop}
Let $L$ is as in \eqref{L}. Then $\phi \in C^2(\re^N)$ solves $L(\phi) = 0$ if and only if $\phi$ is a solution to
\begin{align}
\label{PV(3.1)}
	&|x|^2 \Delta \phi + N(N-2) \frac{(x \cdot \nabla \phi)}{1 + |x|^{\frac{N}{N-1}}} + (N-2) \sum_{i,j=1}^N \frac{\pd^2 \phi}{\pd x_i \pd x_j} x_i x_j \\
	&+ \( \frac{N^3}{N-1} \) \frac{|x|^{\frac{N}{N-1}}}{\( 1 + |x|^{\frac{N}{N-1}} \)^2} \phi = 0. \notag
\end{align}
\end{prop}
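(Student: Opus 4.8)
The plan is to substitute the explicit bubble $U$ from \eqref{bubble} directly into \eqref{L} and reduce the two divergence terms to the scalar operators appearing in \eqref{PV(3.1)}. First I would record the radial data of $U$. Since $U$ is radial, $\nabla U = f(|x|)\,x$ with $f(r) = -\frac{N^2}{N-1}\,\frac{r^{(2-N)/(N-1)}}{1+r^{N/(N-1)}}$ ($r=|x|$), so that
\[
	|\nabla U| = \frac{N^2}{N-1}\,\frac{|x|^{1/(N-1)}}{1+|x|^{N/(N-1)}}, \qquad e^U = \frac{C_N}{\(1+|x|^{N/(N-1)}\)^N}.
\]
The structural fact I would emphasize is that $\nabla U$ is parallel to $x$; this is exactly what converts the quasi-linear terms in \eqref{L} into the operators $\Delta\phi$, $x\cdot\nabla\phi$ and $\sum_{i,j=1}^N\frac{\pd^2\phi}{\pd x_i\pd x_j}x_ix_j$ of \eqref{PV(3.1)}.

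Next I would expand the two divergence terms. For the first, with the radial scalar $g(|x|)=|\nabla U|^{N-2}$ one has ${\rm div}(g\nabla\phi) = g\,\Delta\phi + \frac{g'(r)}{r}\,(x\cdot\nabla\phi)$. For the second, using $\nabla U = f x$ the field becomes $\rho(|x|)\,(x\cdot\nabla\phi)\,x$ with radial scalar $\rho = |\nabla U|^{N-4}f^2 = |\nabla U|^{N-2}/|x|^2$, and a direct computation gives
\[
	{\rm div}\(\rho\,(x\cdot\nabla\phi)\,x\) = \rho\sum_{i,j=1}^N\frac{\pd^2\phi}{\pd x_i\pd x_j}x_ix_j + \(r\rho'(r) + (N+1)\rho\)(x\cdot\nabla\phi),
\]
where the Hessian term comes from $x\cdot\nabla(x\cdot\nabla\phi) = (x\cdot\nabla\phi) + \sum_{i,j}\frac{\pd^2\phi}{\pd x_i\pd x_j}x_ix_j$. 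Collecting, $L(\phi)$ is a combination of $\Delta\phi$, $\sum_{i,j}\frac{\pd^2\phi}{\pd x_i\pd x_j}x_ix_j$, $x\cdot\nabla\phi$ and $\phi$ with explicit radial coefficients.

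The second-order coefficients already match \eqref{PV(3.1)} for free: because $\nabla U$ is radial, $g = |\nabla U|^{N-2} = |x|^2|\nabla U|^{N-4}f^2 = |x|^2\rho$, so $\Delta\phi$ and $\sum_{i,j}\frac{\pd^2\phi}{\pd x_i\pd x_j}x_ix_j$ enter with coefficients $|x|^2\rho$ and $(N-2)\rho$, i.e. in the ratio $|x|^2:(N-2)$ required by \eqref{PV(3.1)}. I would then factor out $\rho$ (positive for $x\ne0$) and write $L(\phi) = \rho\,E(\phi)$, where $E(\phi)$ is the left-hand side of \eqref{PV(3.1)} once the two scalar identities
\[
	\frac{e^U}{\rho} = \(\frac{N^3}{N-1}\)\frac{|x|^{N/(N-1)}}{\(1+|x|^{N/(N-1)}\)^2}, \qquad \frac{1}{\rho}\left[\frac{g'}{r} + (N-2)\(r\rho' + (N+1)\rho\)\right] = \frac{N(N-2)}{1+|x|^{N/(N-1)}}
\]
are verified. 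The zeroth-order identity is a clean cancellation using the exact constant $C_N = N\(\frac{N^2}{N-1}\)^{N-1}$.

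The hard part will be the first-order identity: it forces one to differentiate $g$ and $\rho$ and to keep track of the powers of $|x|$ together with the factors $1+|x|^{N/(N-1)}$ until everything collapses to $N(N-2)/(1+|x|^{N/(N-1)})$. Setting $s = |x|^{N/(N-1)}$ and using $r\,\frac{d}{dr} = \frac{N}{N-1}\,s\,\frac{d}{ds}$ organizes the bookkeeping: the $s/(1+s)$ contributions from $g'/r$ and from $r\rho'$ combine, with the factor $N-2$, into a single $-N(N-2)\,s/(1+s)$, while the remaining constants sum to $N(N-2)$, and the two produce $N(N-2)/(1+s)$. A final point is the degeneracy of $\nabla U$ at the origin: since the argument divides by $\rho = |\nabla U|^{N-2}/|x|^2$, the equivalence between $L(\phi)=0$ and \eqref{PV(3.1)} is first obtained pointwise on $\re^N\setminus\{0\}$, and because \eqref{PV(3.1)} holds trivially at $x=0$ it extends to all of $\re^N$ by continuity for $\phi\in C^2(\re^N)$.
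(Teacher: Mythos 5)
Your proposal is correct and follows essentially the same route as the paper: both substitute the explicit radial form of $U$ into \eqref{L}, expand the two divergence terms, and match the resulting coefficients of $\Delta \phi$, $\sum_{i,j}\frac{\pd^2\phi}{\pd x_i \pd x_j}x_i x_j$, $x\cdot\nabla\phi$ and $\phi$ against \eqref{PV(3.1)}. Your bookkeeping (writing the second field as $\rho\,(x\cdot\nabla\phi)\,x$ with $\rho = |\nabla U|^{N-2}/|x|^2$ and verifying two scalar identities, both of which do check out) is a somewhat tidier organization of the paper's seven-term expansion $A+\cdots+G$, but not a different argument.
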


\begin{proof}
We rewrite the equation $L(\phi) = 0$ as
\begin{align*}
	L(\phi) &= {\rm div } (|\nabla U|^{N-2} \nabla \phi) + (N-2) {\rm div } (|\nabla U|^{N-4} (\nabla U \cdot \nabla \phi) \nabla U) + e^U \phi \\
	&= |\nabla U|^{N-2} \Delta \phi \\
	&+ \nabla (|\nabla U|^{N-2}) \cdot \nabla \phi \\
	&+ (N-2) |\nabla U|^{N-4} (\nabla U \cdot \nabla \phi) \Delta U \\
	&+ (N-2) (\nabla U \cdot \nabla \phi) \nabla (|\nabla U|^{N-4}) \cdot \nabla U \\
	&+ (N-2) |\nabla U|^{N-4} \nabla ( \frac{1}{2} |\nabla U|^2 ) \cdot \nabla \phi \\
	&+ (N-2) |\nabla U|^{N-4} (D^2\phi) (\nabla U, \nabla U) \\
	&+ e^U \phi \\
	&=  A + B + C + D + E + F + G = 0,
\end{align*}
where we have used 
\[
	\nabla (\nabla U \cdot \nabla \phi) \cdot \nabla \phi =  \nabla ( \frac{1}{2} |\nabla U|^2 ) \cdot \nabla \phi + D^2\phi (\nabla U, \nabla U)
\]
with the notation that $(D^2\phi) (\nabla U, \nabla U) = \sum_{i,j=1}^N \frac{\pd^2 \phi}{\pd x_i \pd x_j} \frac{\pd U}{\pd x_i} \frac{\pd U}{\pd x_j}$.

Now, we compute that
\begin{align*}
	&\nabla U = - \(\frac{N^2}{N-1}\) \frac{|x|^{\frac{1}{N-1}}}{1 + |x|^{\frac{N}{N-1}}} \frac{x}{|x|}, \\
	&|\nabla U|^k = \(\frac{N^2}{N-1}\)^k \frac{|x|^{\frac{k}{N-1}}}{(1 + |x|^{\frac{N}{N-1}})^k}, \quad (k \in \Z) \\
	&\nabla (|\nabla U|^k) =  \(\frac{N^2}{N-1}\)^k \(\frac{k}{N-1} \) \frac{|x|^{\frac{k}{N-1}-1}}{\( 1 + |x|^{\frac{N}{N-1}} \)^{k+1}}  \left\{ 1+ (1 - N) |x|^{\frac{N}{N-1}} \right\} \frac{x}{|x|}, \quad (k \in \Z).
\end{align*}
Thus we have
\begin{align*}
	&\nabla U \cdot \nabla \phi = -\(\frac{N^2}{N-1}\) \frac{|x|^{\frac{1}{N-1}-1}}{1 + |x|^{\frac{N}{N-1}}} (x \cdot \nabla \phi), \\
	&\nabla \( |\nabla U|^{N-4} \) \cdot \nabla U = - \(\frac{N^2}{N-1}\)^{N-3} \(\frac{N-4}{N-1}\) \frac{|x|^{\frac{-2}{N-1}}}{\( 1 + |x|^{\frac{N}{N-1}} \)^{N-2}} \left\{ 1+ (1 - N) |x|^{\frac{N}{N-1}} \right\}, \\
	&(D^2 \phi)(\nabla U, \nabla U) = \(\frac{N^2}{N-1}\)^2 \frac{|x|^{\frac{2}{N-1}-2}}{( 1+ |x|^{\frac{N}{N-1}} )^2} \sum_{i, j = 1}^N \frac{\pd^2 \phi}{\pd x_i \pd x_j} x_i x_j.
\end{align*}
Also we see
\begin{align*}
	&\Delta U = -\( \frac{N^2}{N-1} \) \frac{|x|^{\frac{1}{N-1}-1}}{\(1 + |x|^{\frac{N}{N-1}}\)^2} \left\{ \( N-1 + \frac{1}{N-1} \) + (N-2) |x|^{\frac{N}{N-1}} \right\}.
\end{align*}
From these, we obtain
\begin{align*}
	A &= |\nabla U|^{N-2} \Delta \phi =  \(\frac{N^2}{N-1}\)^{N-2} \frac{|x|^{\frac{N-2}{N-1}}}{(1 + |x|^{\frac{N}{N-1}})^{N-2}} \Delta \phi, \\
	B &= \nabla (|\nabla U|^{N-2}) \cdot \nabla \phi \\
	&= \(\frac{N^2}{N-1}\)^{N-2} \(\frac{N-2}{N-1}\) \frac{|x|^{\frac{-N}{N-1}}}{(1 + |x|^{\frac{N}{N-1}})^{N-1}} \left\{ 1+ (1 - N) |x|^{\frac{N}{N-1}} \right\} (x \cdot \nabla \phi), \\
	C &=(N-2) |\nabla U|^{N-4} (\nabla U \cdot \nabla \phi) \Delta U \\
	&= (N-2) \(\frac{N^2}{N-1}\)^{N-2} \frac{|x|^{\frac{-N}{N-1}}}{(1 + |x|^{\frac{N}{N-1}})^{N-1}} \left\{ \( N-1 + \frac{1}{N-1} \) + (N-2)|x|^{\frac{N}{N-1}} \right\} (x \cdot \nabla \phi), \\
	D &= (N-2)(\nabla U \cdot \nabla \phi) \nabla (|\nabla U|^{N-4}) \cdot \nabla U \\
	&= (N-2) \(\frac{N^2}{N-1}\)^{N-2} \(\frac{N-4}{N-1}\) \frac{|x|^{\frac{-N}{N-1}}}{(1 + |x|^{\frac{N}{N-1}})^{N-1}} \left\{ 1 + (1-N) |x|^{\frac{N}{N-1}} \right\} (x \cdot \nabla \phi), \\
	E &= (N-2) |\nabla U|^{N-4} \nabla \( \frac{1}{2} |\nabla U|^2 \) \cdot \nabla \phi \\
	&= \(\frac{N^2}{N-1}\)^{N-2} \(\frac{N-2}{N-1}\) \frac{|x|^{\frac{-N}{N-1}}}{(1 + |x|^{\frac{N}{N-1}})^{N-1}} \left\{ 1 + (1-N) |x|^{\frac{N}{N-1}} \right\} (x \cdot \nabla \phi), \\
	F &= (N-2) |\nabla U|^{N-4} (D^2\phi) (\nabla U, \nabla U) \\
	&= (N -2)\(\frac{N^2}{N-1}\)^{N-2} \frac{|x|^{\frac{-N}{N-1}}}{(1 + |x|^{\frac{N}{N-1}})^{N-2}}  \sum_{i, j = 1}^N \frac{\pd^2 \phi}{\pd x_i \pd x_j} x_i x_j, \\
	G &= e^U \phi = \frac{C_N}{(1 + |x|^{\frac{N}{N-1}})^N} \phi.
\end{align*}
Returning to the equation $L(\phi) = 0$ with these expressions and after some manipulations, 
we obtain that $L(\phi) = 0$ is equivalent to that $\phi$ satisfies \eqref{PV(3.1)}.
\end{proof}

\vspace{1em}\noindent
{\it Proof of Theorem \ref{Theorem:nondegeneracy}.}

As in \cite{Baraket-Pacard}, \cite{El Mehdi-Grossi}, and \cite{Pistoia-Vaira}, we decompose a solution $\phi$ to \eqref{PV(3.1)} by using spherical harmonics.
Let us denote $x = r \omega$, $r = |x|$, $\omega = \frac{x}{|x|} \in S^{N-1}$ for a point $x \in \re^N$.
We write
\begin{equation}
\label{decomposition}
	\phi(x) = \phi(r, \omega) = \sum_{k=0}^{\infty} \psi_k(r) Y_k(\omega), \quad \psi_k(r) = \int_{S^{N-1}} \phi(r, \omega) Y_k(\omega) dS_{\omega},
\end{equation}
where $Y_k(\omega)$ denote the $k$-th spherical harmonics, 
that is, the $k$-th eigenfunctions for the Laplace-Beltrami operator $\Delta_{S^{N-1}}$ on $S^{N-1}$ associated with the $k$-th eigenvalue $\la_k$:
\[
	-\Delta_{S^{N-1}} Y_k = \la_k Y_k \quad \text{on} \, S^{N-1}
\]
where
\[
	\la_k = k(k + N -2), \quad k=0,1,2,\dots,
\]
denotes the $k$-th eigenvalue.
It is known that the multiplicity of $\la_k$ is $\frac{(2k + N-2)(N + k-3)!}{k! (N-2)!}$,
especially, $\la_0 = 0$ has the multiplicity $1$ and $\la_1 = N-1$ has the multiplicity $N$.

We derive the equation satisfied by $\psi_k$ for $k=0,1,2, \dots$.
Let $\nabla_{\omega}$ denote the spherical gradient operator on $S^{N-1}$.
Since the decomposition of the gradient operator
\[
	\nabla = \omega \frac{\pd}{\pd r} + \frac{1}{r} \nabla_{\omega}, \quad \omega \cdot \nabla_{\omega} \equiv 0
\]
holds, 
for a function $\phi$ of the form $\phi(x) = \psi(r) Y(\omega)$,
we have
\begin{align*}
	&x \cdot \nabla \phi = x \cdot \nabla (\psi(r) Y(\omega)) = r \psi'(r) Y(\omega), \\ 
	&\sum_{i,j=1}^N \frac{\pd^2 \phi}{\pd x_i \pd x_j} x_i x_j = \sum_{i,j=1}^N \frac{\pd^2 (\psi(r) Y(\omega))}{\pd x_i \pd x_j} x_i x_j = r^2 \psi''(r) Y(\omega).
\end{align*}
Also recall the formula
\[
	\Delta = \frac{\pd^2}{\pd r^2} + \frac{N-1}{r} \frac{\pd}{\pd r} + \frac{1}{r^2} \Delta_{S^{N-1}}.
\]
Thus we have, for $\phi$ of the form $\phi(x) = \psi(r) Y(\omega)$, the equation \eqref{PV(3.1)} becomes
\begin{align*}
	&r^2 \( \psi''(r) + \frac{N-1}{r} \psi'(r) \) Y(\omega) + \psi(r) \Delta_{S^{N-1}} Y(\omega) \\
	&+ N(N-2) \frac{r \psi'(r) Y(\omega)}{1 + r^{\frac{N}{N-1}}} + (N-2) r^2 \psi''(r) Y(\omega) 
	+ \( \frac{N^3}{N-1} \) \frac{r^{\frac{N}{N-1}}}{\( 1 + r^{\frac{N}{N-1}} \)^2} \psi(r) Y(\omega) = 0.
\end{align*}
Thus inserting \eqref{decomposition} into \eqref{PV(3.1)}, we see that each $\psi_k$ must be a solution to
\begin{align}
\label{L_k=0}
	L_k(\psi) & :=\psi''(r) + \( 1 + \frac{N(N-2)}{N-1} \frac{1}{1 + r^{\frac{N}{N-1}}} \)  \frac{\psi'(r)}{r} \\ 
	&-\frac{\la_k}{N-1} \frac{\psi(r)}{r^2} + \frac{N^3}{(N-1)^2} \frac{r^{\frac{N}{N-1}}}{\( 1 + r^{\frac{N}{N-1}} \)^2} \frac{\psi(r)}{r^2} = 0. \notag
\end{align}
Also note that, by using the expression $U(r) = \log \frac{C_N}{\( 1 + r^{\frac{N}{N-1}} \)^N}$, we see that the equation 
\[
	L(\phi) = {\rm div } (|\nabla U|^{N-2} \nabla \phi) + (N-2) {\rm div } (|\nabla U|^{N-4} (\nabla U \cdot \nabla \phi) \nabla U) + e^U \phi = 0
\]
for $\phi(x) = \psi(r) Y(\omega)$ is equivalent to that $\psi$ satisfies
\begin{align}
\label{L_k=0bis}
	\left\{ r^{N-1} \psi'(r) |U'(r)|^{N-2} \right\}' - \la_k r^{N-3} \(\frac{1}{N-1}\) |U'(r)|^{N-2} \psi(r) + \frac{e^{U(r)}}{N-1} r^{N-1} \psi(r) = 0.
\end{align}
In the following, we treat the equation $L_k(\psi) = 0$ in \eqref{L_k=0} for $k=0$, $k=1$, and $k \ge 2$ separately.

\vspace{1em}\noindent
{\bf The case $k=0$.} 

By the invariance under the scaling, we know that $Z_0(x)$ defined in \eqref{Z_0} satisfies \eqref{PV(3.1)}.
Since
\[
	Z_0(x) = x \cdot \nabla U(x) + N = \(\frac{N}{N-1}\) \underbrace{\frac{(N-1) - |x|^{\frac{N}{N-1}}}{1 + |x|^{\frac{N}{N-1}}}}_{:= \psi_0(|x|)},
\]
we see that
\[
	\psi_0(r) = \frac{(N-1) - r^{\frac{N}{N-1}}}{1 + r^{\frac{N}{N-1}}}
\]
is a solution of $L_0(\psi) = 0$, which is bounded on $[0, +\infty)$. 

We claim that any other bounded solution of $L_0(\psi) = 0$ must be a constant multiple of $\psi_0$.
Indeed, assume the contrary that there existed the second linearly independent, bounded solution $\psi$ satisfying $L_0(\psi) = 0$.
We may always assume that $\psi$ is of the form
\[
	\psi(r) = c(r) \psi_0(r)
\]
for some $c = c(r)$.
Inserting this into \eqref{L_k=0} and noting $\la_0 = 0$, we obtain
\begin{align*}
	&c''(r) \psi_0(r) + c'(r) \left[ 2 \psi_0'(r) + \frac{\psi_0(r)}{r} \( 1 + \frac{N(N-2)}{N-1} \frac{1}{1 + r^{\frac{N}{N-1}}} \) \right] \\
	& + c \underbrace{\left[ \psi_0''(r) + \( 1 + \frac{N(N-2)}{N-1} \frac{1}{1 + r^{\frac{N}{N-1}}} \)  \frac{\psi_0'(r)}{r}
	+ \frac{N^3}{(N-1)^2} \frac{r^{\frac{N}{N-1}}}{\( 1 + r^{\frac{N}{N-1}} \)^2} \frac{\psi_0(r)}{r^2} \right]}_{=0} = 0,
\end{align*}
which leads to
\begin{align*}
	\frac{c''(r)}{c'(r)} = -2 \frac{\psi_0'(r)}{\psi_0(r)} - \frac{1}{r} \( 1 + \frac{N(N-2)}{N-1} \frac{1}{1 + r^{\frac{N}{N-1}}} \). 
\end{align*}
This can be written as
\begin{align*}
	(\log |c'(r)|)' = -2 (\log |\psi_0(r)|)' - \( 1 + \frac{N(N-2)}{N-1} \) (\log r)' + (N-2) \left\{ \log \(1 + r^{\frac{N}{N-1}} \) \right\}', 
\end{align*}
so we have that
\[
	c'(r) = A \frac{\( 1 + r^{\frac{N}{N-1}} \)^{N-2}}{\psi_0^2(r) r^{1 + \frac{N(N-2)}{N-1}}} 
\]
for some $A \ne 0$.
Since $\psi_0(r) \sim -1$ near $r=\infty$, 
we have
\[
	c'(r) \sim A \frac{r^{\frac{N(N-2)}{N-1}}}{r^{1 + \frac{N(N-2)}{N-1}}} = \frac{A}{r} \quad \text{as} \, r \to \infty
\]
which implies $c(r) \sim A \log r + B$ as $r \to \infty$ for some $A \ne 0$ and $B \in \re$.
However, in this case, $|\psi(r)| \sim |(A \log r + B) \psi_0(r)| \to +\infty$ as $r \to +\infty$, which contradicts to the assumption that $\psi$ is bounded.
Therefore, we obtain the claim.

\vspace{1em}\noindent
{\bf The case $k=1$.} 

By the invariance under the translation, we know that $Z_i(x)$ $(i=1,\dots, N)$ defined in \eqref{Z_i} satisfies \eqref{PV(3.1)}.
Since
\[
	Z_i(x) = \frac{\pd U}{\pd x_i} = - \(\frac{N^2}{N-1}\) \underbrace{\frac{r^{\frac{1}{N-1}}}{1 + r^{\frac{N}{N-1}}}}_{= \psi_1(r)} \frac{x_i}{|x|}, \quad (i=1,\dots, N),
\]
we see that 
\[
	\psi_1(r) = \frac{r^{\frac{1}{N-1}}}{1 + r^{\frac{N}{N-1}}}
\]
is a solution of $L_1(\psi) = 0$, which is bounded (decaying) on $[0, +\infty)$: $\psi_1(r) \sim \frac{1}{r}$ as $r \to +\infty$. 

As before, we claim that any other bounded solution of $L_1(\psi) = 0$ must be a constant multiple of $\psi_1$.
Indeed, assume the contrary that there existed the second linearly independent, bounded solution $\psi$ satisfying $L_1(\psi) = 0$.
We may always assume that $\psi$ is of the form
\[
	\psi(r) = c(r) \psi_1(r)
\]
for some $c = c(r)$.
Inserting this into \eqref{L_k=0} and noting $\la_1 = N-1$, we obtain
\begin{align*}
	&c''(r) \psi_1(r) + c'(r) \left[ 2 \psi_1'(r) + \frac{\psi_1(r)}{r} \( 1 + \frac{N(N-2)}{N-1} \frac{1}{1 + r^{\frac{N}{N-1}}} \) \right] \\
	& + c \underbrace{\left[ \psi_1''(r) + \( 1 + \frac{N(N-2)}{N-1} \frac{1}{1 + r^{\frac{N}{N-1}}} \) \frac{\psi_1'(r)}{r} -\frac{\psi_1(r)}{r^2}
	+ \frac{N^3}{(N-1)^2} \frac{r^{\frac{N}{N-1}}}{\( 1 + r^{\frac{N}{N-1}} \)^2} \frac{\psi_1(r)}{r^2} \right]}_{=0} = 0,
\end{align*}
which leads to
\begin{align*}
	\frac{c''(r)}{c'(r)} = -2 \frac{\psi_1'(r)}{\psi_1(r)} - \frac{1}{r} \( 1 + \frac{N(N-2)}{N-1} \frac{1}{1 + r^{\frac{N}{N-1}}} \). 
\end{align*}
Again we have that
\[
	c'(r) = A \frac{\( 1 + r^{\frac{N}{N-1}} \)^{N-2}}{\psi_1^2(r) r^{1 + \frac{N(N-2)}{N-1}}} 
\]
for some $A \ne 0$.
Since $\psi_1(r) \sim \frac{1}{r}$ as $r \to +\infty$, we obtain
\[
	c'(r) \sim A \frac{r^{\frac{N(N-2)}{N-1}}}{r^{-1 + \frac{N(N-2)}{N-1}}} = A r \quad \text{as} \, r \to \infty
\]
which implies $c(r) \sim \frac{A}{2} r^2 + B$ as $r \to \infty$ for some $A \ne 0$ and $B \in \re$.
However, in this case, $\psi(r) \sim (\frac{A}{2} r^2 + B) \psi_1(r) \sim \frac{A}{2} r$ as $r \to +\infty$, which contradicts to the assumption that $\psi$ is bounded.
Therefore, we obtain the claim.

\vspace{1em}\noindent
{\bf The case $k \ge 2$.} 

In this case, we claim that all the bounded solutions of $L_k(\psi) = 0$ are identically zero. 
Assume the contrary that there existed $\psi \not\equiv 0$ satisfying $L_k(\psi) = 0$.
We may assume that there exists $R_k > 0$ such that $\psi(r) > 0$ on $(0, R_k)$ and $\psi'(R_k) \le 0$.
Now, $\psi$ satisfies \eqref{L_k=0bis}:
\begin{align}
\label{E_k}
	\left\{ r^{N-1} \psi'(r) |U'(r)|^{N-2} \right\}' - \la_k r^{N-3} \(\frac{1}{N-1}\) |U'(r)|^{N-2} \psi(r) + \frac{e^{U(r)}}{N-1} r^{N-1} \psi(r) = 0.
\end{align}
Also $\psi_1$ is a solution of \eqref{L_k=0bis} for $k=1$:
\begin{align}
\label{E_1}
	\left\{ r^{N-1} \psi_1'(r) |U'(r)|^{N-2} \right\}' - \la_1 r^{N-3} \(\frac{1}{N-1}\) |U'(r)|^{N-2} \psi_1(r) + \frac{e^{U(r)}}{N-1} r^{N-1} \psi_1(r) = 0.
\end{align}
Multiply \eqref{E_k} by $\psi_1$ and multiply \eqref{E_1} by $\psi_k$ and subtracting, we have
\[
	\left\{ r^{N-1} \psi_k'(r) |U'(r)|^{N-2} \right\}' \psi_1 - \left\{ r^{N-1} \psi_1'(r) |U'(r)|^{N-2} \right\}' \psi_k = \frac{\la_k - \la_1}{N-1} r^{N-3} |U'(r)|^{N-2} \psi_k \psi_1.
\]
Integrating both sides of the above from $r = 0$ to $r = R_k$ and using $\psi_k(R_k) = 0$, we obtain
\begin{align}
\label{contradiction}
	R_k^{N-1}|U'(r)|^{N-2} \psi_k'(R_k) \psi_1(R_k) = \frac{\la_k - \la_1}{N-1} \int_0^{R_k} r^{N-3}|U'(r)|^{N-2} \psi_k(r) \psi_1(r) dr.
\end{align}
Since $\la_k > \la_1$ for $k \ge 2$, $\psi_k(r) > 0$ on $(0, R_k)$, and $\psi_1(r) > 0$, the right-hand side of \eqref{contradiction} is positive.
On the other hand, the left-hand side of \eqref{contradiction} is non positive since $\psi_k'(R_k) \le 0$.
This contradiction implies the claim.

Combining all these facts, we have finished the proof of Theorem \ref{Theorem:nondegeneracy}.
\qed

%
%

\vspace{1em}\noindent
{\bf Acknowledgments.}

Part of this work was supported by 
JSPS Grant-in-Aid for Scientific Research (B), No.19H01800.
This work was partly supported by Osaka Central Advanced Mathematical Institute: MEXT Joint Usage/Research Center on Mathematics and Theoretical Physics JPMXP0619217849. 


\end{document}